\documentclass[12pt,oneside,reqno]{amsart}
\usepackage{graphicx}

\graphicspath{ {figs/} }

\usepackage{pict2e}
\usepackage{amssymb}
\usepackage{amsthm}                
\usepackage[margin=0.9in]{geometry}  

\usepackage{epstopdf}
\usepackage{amsmath}
\usepackage{graphicx}
\usepackage{subfigure}
\usepackage{latexsym}
\usepackage{amsmath}
\usepackage{amsfonts}
\usepackage{amssymb}
\usepackage{tikz}
\usepackage{mathdots}
\usepackage{comment}
\usepackage{float}
\usepackage[mathscr]{euscript}
\usepackage{enumerate}
\usepackage{epsfig}
\usepackage { graphics }
\usepackage { graphicx }

\usetikzlibrary{arrows.meta}
\usetikzlibrary{knots}
\usetikzlibrary{hobby}
\usetikzlibrary{arrows,decorations.markings}

\usepackage[bookmarks=true,bookmarksnumbered=true, breaklinks=true,pdfstartview=FitH,hyperfigures=false,plainpages=false, naturalnames=true,colorlinks=true,pagebackref=true,pdfpagelabels]{hyperref}

\usepackage[utf8]{inputenc}
\usepackage{longtable}
\epstopdfsetup{outdir=./}
\usepackage[lite]{amsrefs}

\renewcommand{\PrintDOI}[1]{\href{http://dx.doi.org/\detokenize{#1}}{doi: \detokenize{#1}}%
	\IfEmptyBibField{pages}{, (to appear in print)}{}}

\theoremstyle{definition}
\newtheorem{theorem}{Theorem}[section]
\newtheorem{lemma}[theorem]{Lemma}

\newtheorem{proposition}[theorem]{Proposition}

\theoremstyle{definition}
\newtheorem{definition}[theorem]{Definition}
\newtheorem{example}[theorem]{Example}
\theoremstyle{remark}
\newtheorem{remark}[theorem]{Remark}
\numberwithin{equation}{section}

\bibliographystyle{alphanum} 

\numberwithin{equation}{section}


\title{ Polynomial Invariants of Singular Knots and links}

\author{Jose Ceniceros}
\address{Hamilton College, Clinton, NY }
\email{jcenicer@hamilton.edu}

\author{Indu R. Churchill}
\address{State University of New York at Oswego, Oswego, NY }
\email{indurasika.churchill@oswego.edu}

\author{Mohamed Elhamdadi}
\thanks{M.E. was partially supported by Simons Foundation collaboration grant 712462.}
\address{University of South Florida, Tampa, FL }
\email{emohamed@math.usf.edu}

\date{}
\subjclass[2020]{Primary 57K12, 05C38; Secondary 05A15}
\keywords{Quandle polynomial, Singular Knots and Links, Singquandle polynomial}
\dedicatory{}

\begin{document}
\maketitle 
	
\begin{abstract}
We generalize the notion of the quandle polynomial to the case of singquandles. We show that the singquandle polynomial is an invariant of finite singquandles. We also construct a singular link invariant from the singquandle polynomial and show that this new singular link invariant generalizes the singquandle counting invariant. In particular, using the new polynomial invariant, we can distinguish singular links with the same singquandle counting invariant.

\end{abstract}


\section{Introduction}

Self-distributive algebraic structures $[(x*y)*z=(x*z)*(y*z)]$, in general, and quandles in particular are non-associative structures motivated by oriented Reidemeister moves in knot theory.  Quandles were introduced independently in \cites{Joyce, Matveev} in the 1980s.  In the past few decades, there have been many studies relating quandles to other areas of mathematics such as the study of the Yang-Baxter equation in \cites{CES, CN}, quasigroups and Moufang loops in \cite{E}, ring theory in \cite{EFT}, representation theory in \cite{EM} and other areas.    
Recently, relations between these distributive structures and singular knot theory have been introduced with the purpose of constructing invariants of singular links in \cites{ BEHY,CCEH, CEHN, NOS}.  A Jones-type invariant for singular links using a variation of the Hecke algebra was constructed and was shown to  satisfy some skein relation in \cite{JL}.  In 1993, connections between Jones type invariants in \cite{Jones} and Vassiliev invariants of singular knots in \cite{V} were established in \cite{BL}.  

In this article, we generalize the notion of the quandle polynomial to the case of singquandles. We also introduce the notion of the fundamental singquandle and use it to define a new polynomial invariant of oriented singular knots and links.  Our invariant generalizes both the coloring invariant of singular links in \cites{BEHY, CEHN} and the polynomial invariant defined in \cite{N}.  We use the new polynomial invariant we construct to distinguish singular links with the same singquandle counting invariant and Jablan polynomial. 

This article is organized as follows.  In Section~\ref{review}, we review the basics of quandles and give some examples. In Section~\ref{quandle polynomial}, we recall the construction of the quandle polynomial in \cites{EN, N}.  In Section~\ref{OSKQ}, we give a diagrammatic definition of singular knots and generalized Reidemeister moves leading to the definition of \emph{oriented singquandles}.  In Section~\ref{Fund}, we define the fundamental singquandle of a singular link and we provide an example.  In Section~\ref{SP}, we introduce a generalization of the quandle polynomial defined in \cite{N} with the aim of defining a polynomial invariant of singular knots and links.  In Section~\ref{examples}, we compute the invariant defined in Section~\ref{SP} and use it 
to distinguish singular knots and links.

\section{Basics of Quandles}\label{review}
In this section, we review the basics of quandles; more details on the topic can be found in  \cites{EN, Joyce, Matveev}.
\begin{definition}\label{quandle}  A set $(X,\ast)$ is called a \emph{quandle} if the following three identities are satisfied.
\begin{eqnarray*}
& &\mbox{\rm (i) \ }   \mbox{\rm  For all $x \in X$,
$x* x =x$.} \label{axiom1} \\
& & \mbox{\rm (ii) \ }\mbox{\rm For all $y,z \in X$, there is a unique $x \in X$ such that 
$ x*y=z$.} \label{axiom2} \\
& &\mbox{\rm (iii) \ }  
\mbox{\rm For all $x,y,z \in X$, we have
$ (x*y)*z=(x*z)*(y*z). $} \label{axiom3} 
\end{eqnarray*}
\end{definition} 
From Axiom (ii) of Definition~\ref{quandle}, we can write the element $x$ as $z \bar{*} y = x$.  In other words, the equation $x * y=z$ is equivalent to $z \bar{*} y = x$.  The axioms of a quandle correspond respectively to the three Reidemeister moves of types I, II and III (see \cite{EN} for example).  In fact, one of the motivations of defining quandles came from knot diagrammatics. 

 A {\it quandle homomorphism} between two quandles $(X,*)$ and $(Y,\triangleright)$ is a map $f: X \rightarrow Y$ such that $f(x *y)=f(x) \triangleright f(y) $, where
 $*$ and $\triangleright$ 
 denote respectively the quandle operations of $X$ and $Y.$  Furthermore if $f$ is a bijection then it is called a 
{\it quandle isomorphism} between $X$ and $Y$.  
 \\

\noindent The following are some typical examples of quandles.
\begin{itemize}
\item
Any non-empty set $X$ with the operation $x*y=x$ for all $x,y \in X$ is
a quandle called a  {\it trivial} quandle.
\item
Any group $X=G$ with conjugation $x*y=y^{-1} xy$ is a quandle.

\item
Let $G$ be an abelian group.
For elements  
$x,y \in G$, 
define
$x*y \equiv 2y-x$.
Then $\ast$ defines a quandle
structure on $G$ called \emph{Takasaki} quandle.  In case $G=\mathbb{Z}_n$ (integers mod $n$) the quandle is called {\it dihedral quandle}.
This quandle can be identified with  the
set of reflections of a regular $n$-gon
  with conjugation
as the quandle operation.
\item
Any $\Lambda = (\mathbb{Z }[T^{\pm 1}])$-module $M$
is a quandle with
$x*y=Tx+(1-T)y$, $x,y \in M$, called an {\it  Alexander  quandle}.

\item
A {\it generalized Alexander quandle} is defined  by 
a pair $(G, f)$ where 
$G$ is a  group and $f \in {\rm Aut}(G)$,
and the quandle operation is defined by 
$x*y=f(xy^{-1}) y $.  
\end{itemize}


\section{Review of the Quandle Polynomial}\label{quandle polynomial}
In this section, we recall the definition of the quandle polynomial, the subquandle polynomial, and the link invariants obtained from the subquandle polynomial. For a detailed construction of these polynomials, see \cites{EN,N}.
\begin{definition}
Let  $(Q,*)$ be a finite quandle. For any element $x \in Q$,  let
\[ C(x) = \lbrace y \in Q \, : \, y *x = y \rbrace \quad \text{and} \quad R(x) = \lbrace y \in Q \, : \, x *y = x \rbrace  \]
and set $r(x) = \vert R(x) \vert$ and $c(x) = \vert C(x) \vert$. Then the \emph{quandle polynomial of Q}, $qp_Q(s,t)$, is 
\[ qp_Q(s,t) = \sum_{x \in Q} s^{r(x)}t^{c(x)}.\]
\end{definition}

In \cite{N}, the quandle polynomial was shown to be an effective invariant of finite quandles. In addition to being an invariant of finite quandles, the quandle polynomial was generalized to give information about how a subquandle is embedded in a larger quandle.

\begin{definition}
Let $S \subset Q$ be a subquandle of $Q$. The \emph{subquandle polynomial of S}, $qp_{S \subset Q}(s,t)$, is
\[qp_{S \subset Q}(s,t) = \sum_{x \in S} s^{r(x)}t^{c(x)}\]
where $r(x)$ and $c(x)$ are defined above.
\end{definition}
Note that for any knot or link $K$, there is an associated fundamental quandle, $Q(K)$, and for any given finite quandle $T$ the set of quandle homomorphism, denoted by $\textup{Hom}(Q(K),T)$, has been used to define computable link invariants, for example, the cardinality of the set is known as the \emph{quandle counting invariant}. In \cite{N}, the subquandle polynomial of the image of each homomorphism was used to enhance the counting invariant.

\begin{definition}
Let $K$ be a link and $T$ be a finite quandle. Then for every $f \in \textup{Hom}(Q(K),T)$, the image of $f$ is a subquandle of $T$. The \emph{subquandle polynomial invariant}, $\Phi_{qp}(K,T)$, is the set with multiplicities
\[ \Phi_{qp}(K,T) = \lbrace qp_{\textup{Im}(f) \subset T} (s,t) \, \vert \, f \in \textup{Hom}(Q(K),T) \rbrace.\]
Alternatively, the multiset can be represented in polynomial form by
\[  \phi_{qp}(K,T) = \sum_{f \in \textup{Hom}(Q(K),T)} u^{qp_{\textup{Im}(f) \subset T} (s,t)}.\]
\end{definition}

\section{Oriented Singular Knots and Quandles}\label{OSKQ}


In this section, we review the definition of a singquandle and we define the fundamental singquandle of a singular link. In \cite{BEHY}, generating sets of the generalized Reidemeister moves were studied and were used to define oriented singquandles.  An oriented singquandle can be thought of as an algebraic structure derived from a generating set of the oriented singular Reidemeister moves. We will follow the convention at classical crossings and singular crossings found in \cite{BEHY}, see Figure~\ref{Rmoves}.
\begin{figure}[h]
	\tiny{
		\centering
	{\includegraphics[scale=0.9]{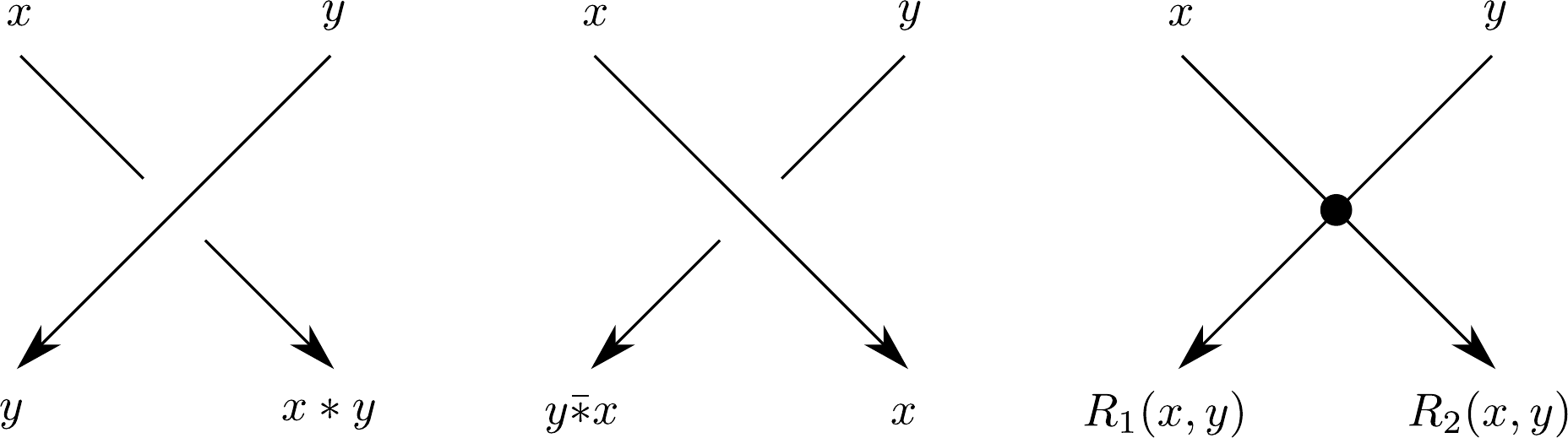}}
		\vspace{.2in}
		\caption{Colorings of classical and singular crossings.}
		\label{Rmoves}}
\end{figure}

The axioms of an oriented singquandle are derived by considering the moves in Figure~\ref{generatingset} and writing the equations obtained using the above binary operations; see Figures \ref{The generalized Reidemeister move RIV}, \ref{The generalized Reidemeister move RIVb} and \ref{The generalized Reidemeister move RV}. Note that by letting $(X,*)$ be a quandle we can ignore the contributions from the classical Reidemeister moves.

\begin{figure}[h] 
\tiny{
\centering
    \includegraphics[scale=.8]{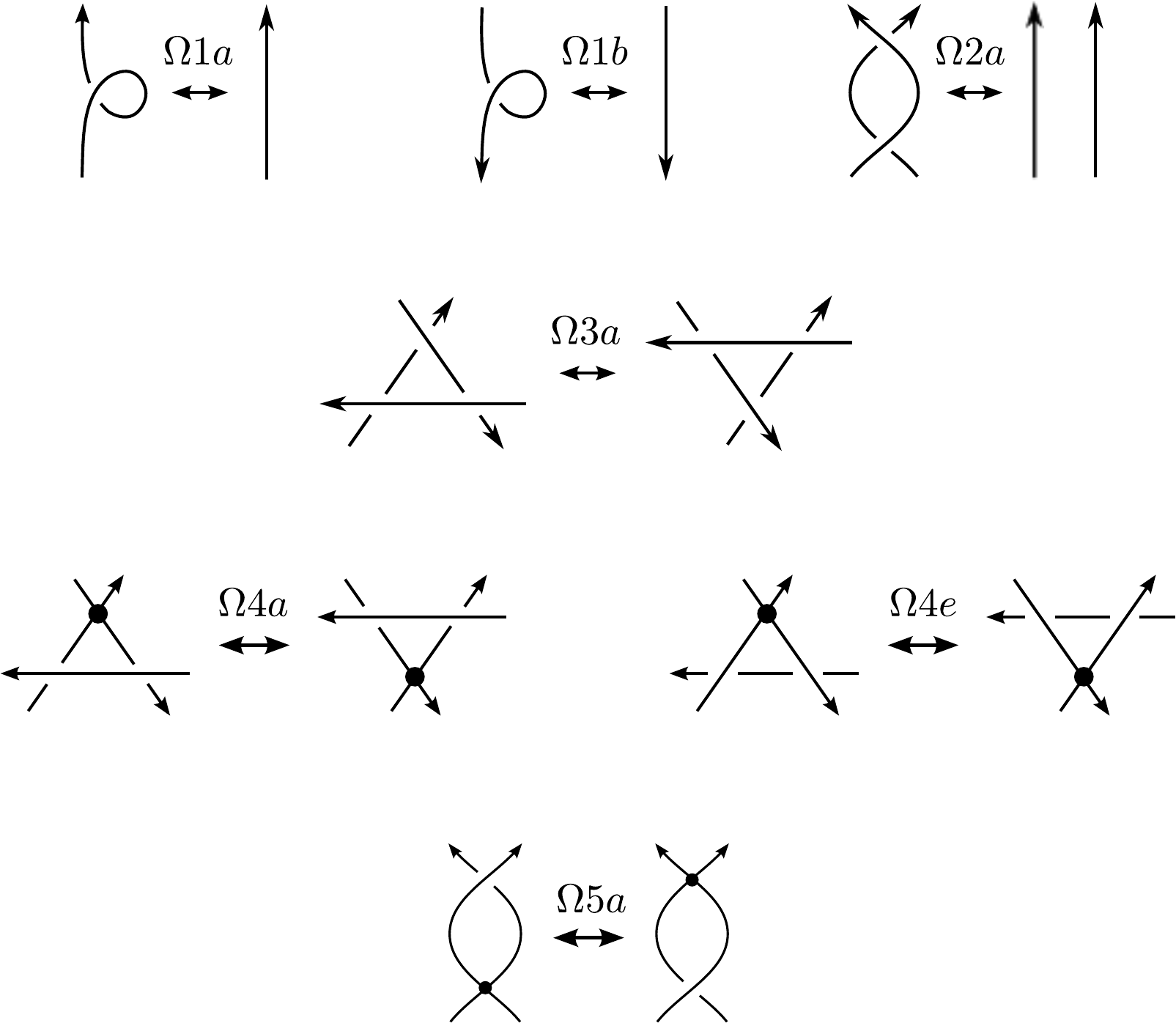}
    \caption{A generating set of singular Reidemeister moves}
    \label{generatingset}}
\end{figure}

\begin{definition}\label{oriented SingQdle}
	Let $(X, *)$ be a quandle.  Let $R_1$ and $R_2$ be two maps from $X \times X$ to $X$.  The quadruple $(X, *, R_1, R_2)$ is called an {\it oriented singquandle} if the following axioms are satisfied for all $a,b,c \in X$:
	\begin{eqnarray}
		R_1(a\bar{*}b,c)*b&=&R_1(a,c*b)  \label{eq1}\\
		R_2(a\bar{*}b, c) & =&  R_2(a,c*b)\bar{*}b \label{eq2}\\
	      (b\bar{*}R_1(a,c))*a   &=& (b*R_2(a,c))\bar{*}c  \label{eq3}\\
R_2(a,b)&=&R_1(b,a*b)   \label{eq4}\\
R_1(a,b)*R_2(a,b)&=&R_2(b,a*b).   \label{eq5}	
\end{eqnarray}	
\end{definition}

\begin{figure}[h] 
\begin{tikzpicture}[use Hobby shortcut, scale=.8]
 	\draw[thick,-stealth] (-1,2) .. (2,-2);
	\draw[thick,-stealth] (2,2) ..  (-1,-2);
	\draw [line width=2mm,white,-stealth](.7,-2)..(0,-1.5).. (-1,0)..(0,1.5).. (.7,2);
	\draw [thick,-stealth] (.7,-2)..(0,-1.5).. (-1,0)..(0,1.5).. (.7,2);

	\node[left] at (-1,2) {\tiny $a$};
	\node[right] at (0,.8){\tiny$a\bar{*}b$};
    \node[left] at (-1,-2) {\tiny $R_1(a\bar{*}b,c)*b$};
    
    \node[right] at (2,2) {\tiny $c$};
    \node[left] at (.5,-2) {\tiny$b$};
    \node[right] at (2,-2) {\tiny $R_2(a \bar{*}b,c)$};
    \node[circle,draw=black, fill=black, inner sep=0pt,minimum size=6pt] (a) at (.5,0) {};
    \draw [very thick, <->] (3,0) -- (4,0);

	\draw[thick,-stealth] (8,2) .. (5,-2);   
    \draw[thick,-stealth] (5,2).. (8,-2);
    \draw [line width=2mm,white,-stealth] (6.3,-2).. (7,-1.5).. (8,0).. (7,1.5)..(6.3,2);
    \draw[thick,-stealth]    (6.3,-2).. (7,-1.5).. (8,0).. (7,1.5)..(6.3,2);
    
    \node[left] at (5,2) {\tiny $a$};
    \node[left] at (8,2) {\tiny $c$};
    \node[left] at (6.3,-2) {\tiny $b$};

    \node[left] at (7,.8) {\tiny$c*b$};
    
        \node[left] at (5.5,-1.4) {\tiny $R_1(a,c*b)$};
    \node[right] at (8,-2) {\tiny $R_2(a,c*b)\bar{*}b$};
    \node[circle,draw=black, fill=black, inner sep=0pt,minimum size=6pt] (a) at (6.5,0) {};
\end{tikzpicture}
\vspace{.2in}
		\caption{The Reidemeister move $\Omega 4a$ and colorings}
		\label{The generalized Reidemeister move RIV}
\end{figure}
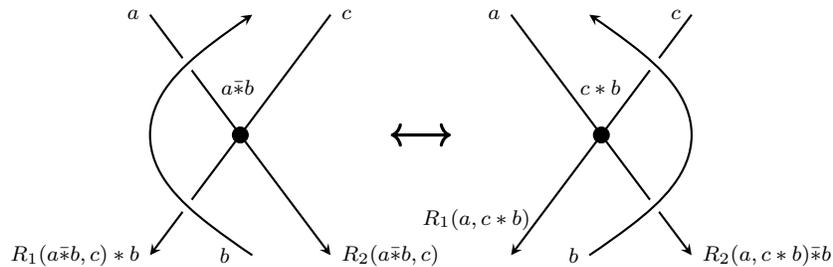

\begin{figure}[h]
\begin{tikzpicture}[use Hobby shortcut, scale=.8]

    \draw [thick,-stealth] (.7,-2)..(0,-1.5).. (-1,0)..(0,1.5).. (.7,2);
    \draw [line width=2mm,white,-stealth](-1,2)..(2,-2);
    \draw [line width=2mm,white,-stealth](2,2)..(-1,-2);
 	\draw[thick,-stealth] (-1,2) .. (2,-2);
	\draw[thick,-stealth] (2,2) ..  (-1,-2);

	\node[left] at (-1,2) {\tiny $a$};
	\node[left] at (-1,0){\tiny$b \bar{*} R_1(a,c)$};
    \node[left] at (-1,-2) {\tiny $R_1(a,c)$};
    \node[above] at (.7,2) {\tiny$(b \bar{*} R_1(a,c))*a$};
    \node[right] at (2,2) {\tiny $c$};
    \node[left] at (.5,-2) {\tiny$b$};
    \node[right] at (2,-2) {\tiny $R_2(a,c)$};
    \node[circle,draw=black, fill=black, inner sep=0pt,minimum size=6pt] (a) at (.5,0) {};
    \draw [very thick, <->] (3,0) -- (4,0);

\draw[thick,-stealth]    (6.3,-2).. (7,-1.5).. (8,0).. (7,1.5)..(6.3,2);
\draw [line width=2mm,white,-stealth] (8,2) .. (5,-2);
\draw [line width=2mm,white,-stealth] (5,2).. (8,-2);
	\draw[thick,-stealth] (8,2) .. (5,-2);   
    \draw[thick,-stealth] (5,2).. (8,-2);

    \node[left] at (5,2) {\tiny $a$};
    \node[left] at (8,2) {\tiny $c$};
    \node[left] at (6.3,-2) {\tiny $b$};

    \node[right] at (8,0) {\tiny$b*R_2(a,c)$};
    \node[left] at (5.5,-1.4) {\tiny $R_1(a,c)$};
    \node[right] at (8,-2) {\tiny $R_2(a,c)$};
    \node[above] at (6.3,2) {\tiny $(b*R_2(a,c))\bar{*}c$};
    \node[circle,draw=black, fill=black, inner sep=0pt,minimum size=6pt] (a) at (6.5,0) {};
\end{tikzpicture}
\vspace{.2in}
		\caption{The Reidemeister move $\Omega 4e$ and colorings}
		\label{The generalized Reidemeister move RIVb}
\end{figure}

\begin{figure}[h]
\begin{tikzpicture}[use Hobby shortcut, scale=.8]
	\draw[thick,-stealth] (-6,2) .. (-6,1.8).. (-7,.2) .. (-7,0)..(-7,-.2) .. (-6,-1.8).. (-6,-2);
	\draw [line width=2mm,white,-stealth](-6,0)..(-6,-.2).. (-7,-1.8)..(-7,-2);	
 	\draw[thick,-stealth] (-7,2).. (-7,1.8).. (-6,.2).. (-6,0)..(-6,-.2).. (-7,-1.8)..(-7,-2) ;
	\node[left] at (-7,2) {\tiny $a$}; 
    \node[left] at (-7,0) {\tiny $R_1(a,b)$};
    \node[left] at (-7,-2) {\tiny $R_2(a,b)$};
	\node[right] at (-6,2) {\tiny $b$};
	\node[right] at (-6,0) {\tiny $R_2(a,b)$};
	\node[right] at (-6,-2) {\tiny $R_1(a,b)*R_2(a,b)$};
\node[circle,draw=black, fill=black, inner sep=0pt,minimum size=6pt] (a) at (-6.5,1) {};
	
    \draw [very thick, <->] (-3,0) -- (-1,0);

	\draw[thick,-stealth] (2,2) .. (2,1.8).. (3,.2) .. (3,0)..(3,-.2) .. (2,-1.8).. (2,-2);
	\draw [line width=2mm,white,-stealth](3,2).. (3,1.8).. (2,.2).. (2,0) ;
	\draw[thick,-stealth] (3,2).. (3,1.8).. (2,.2).. (2,0)..(2,-.2).. (3,-1.8)..(3,-2);
	\node[left] at (2,2) {\tiny $a$}; 
	\node[left] at (2,0) {\tiny $b$};
	\node[left] at (2,-2) {\tiny $R_1(b,a*b)$};
	\node[right] at (3,2) {\tiny $b$};
	\node[right] at (3,0) {\tiny $a * b$};
	\node[right] at (3,-2) {\tiny $R_2(b,a*b)$};
	\node[circle,draw=black, fill=black, inner sep=0pt,minimum size=6pt] (a) at (2.5,-1) {};
\end{tikzpicture}
\vspace{.2in}
		\caption{The Reidemeister move $\Omega 5a$ and colorings}
		\label{The generalized Reidemeister move RV}
\end{figure}
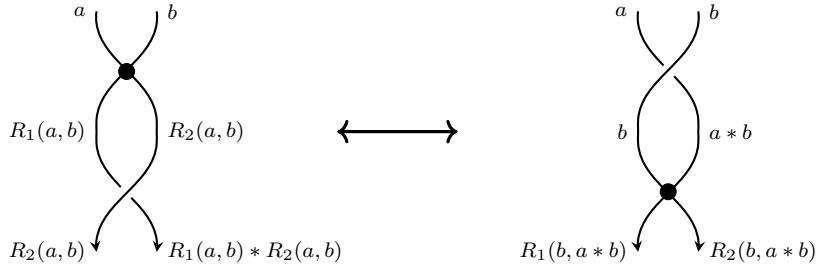

Using affine functions for the maps $*$, $R_1$ and $R_2$ over the set $\mathbb{Z}_n$ of integers mod $n$, the following example was obtained in \cite{BEHY}.
\begin{example}\label{alex}
Take $X=\mathbb{Z}_n$.  For an invertible element $t \in \mathbb{Z}_n$ and any $s \in \mathbb{Z}_n$, define $a*b=ta+(1-t)b$, $R_1(a,b)=sa+(1-s)b$ and $R_2(a,b)=t(1-s)a +(1-t+st)b$.  Then $(\mathbb{Z}_n,*,R_1,R_2)$ is an oriented singquandle.
\end{example}

Now we define the notion of homomorphism and isomorphism of oriented singquandles.  A map $f: X \rightarrow Y$ is called a \emph{homomorphism} of oriented singquandles $(X, *, R_1, R_2)$ and $(Y, \triangleright, R'_1, R'_2)$ if the following conditions are satisfied for all $x,y \in X$
\begin{eqnarray}
f(x*y)&=&f(x) \triangleright f(y)\label{3.6}\\
f(R_1(x,y))&=&R'_1(f(x),f(y))\label{3.7}\\
f(R_2(x,y))&=&R'_2(f(x),f(y)).\label{3.8}
\end{eqnarray}
An oriented singquandle \emph{isomorphism} is a bijective oriented singquandle homomorphism, and two oriented singquandles are \emph{isomorphic} if there exists an oriented singquandle isomorphism between them.
\begin{definition}
Let $(X, *, R_1, R_2)$ be a singquandle.  A subset $M \subset X$ is called a subsingquandle if $(M, *, R_1, R_2)$ is itself a singquandle.  In particular, $M$ is closed under the operations $*, R_1$ and $R_2$.  
\end{definition}

\begin{example}
The quadruple $(\mathbb{Z}, *, R_1,R_2)$ forms a singquandle with $x*y=2y-x$, $R_1(x,y)=\alpha x + (1-\alpha) y$ and $R_2(x,y)=(\alpha -1) x + (2-\alpha) y$, where $\alpha  \in \mathbb{Z}$. 
For a fixed positive integer $n$, consider the subset $n\mathbb{Z} \subset \mathbb{Z}$.  The linearity of the maps $*,R_1$ and $R_2$ immediately implies that $(n\mathbb{Z}, *, R_1,R_2)$ is a subsingquandle of $(\mathbb{Z}, *, R_1,R_2)$.
\end{example}

Given an oriented singquandle homomorphism, we obtain the following lemma.

\begin{lemma}\label{image}
The image, $Im(f)$, of any oriented singquandle homomorphism  $f:(X, *, R_1, R_2) \rightarrow (Y, \triangleright, R'_1, R'_2)$ is always a subsingquandle.
\end{lemma}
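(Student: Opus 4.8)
The plan is to verify the two things required by the definition of a subsingquandle: first that $\mathrm{Im}(f)$ is closed under the operations $\triangleright$, $R'_1$, and $R'_2$ inherited from $Y$, and second that the singquandle axioms, which already hold throughout $Y$, persist when those operations are restricted to $\mathrm{Im}(f)$.

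For closure I would take arbitrary $y_1,y_2 \in \mathrm{Im}(f)$ and write $y_1 = f(x_1)$, $y_2 = f(x_2)$ for some $x_1,x_2 \in X$. The homomorphism conditions \eqref{3.6}, \eqref{3.7}, and \eqref{3.8} then give $y_1 \triangleright y_2 = f(x_1 * x_2)$, $R'_1(y_1,y_2) = f(R_1(x_1,x_2))$, and $R'_2(y_1,y_2) = f(R_2(x_1,x_2))$, each of which lies in $\mathrm{Im}(f)$. This establishes closure under all three operations.

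The step requiring the most care is the second quandle axiom, since it asserts the \emph{existence and uniqueness} of a solution $x$ to $x \triangleright y = z$ rather than a plain identity; passing to a subset preserves uniqueness automatically but not existence. To handle this I would first record that a quandle homomorphism intertwines the dual operations: if $a\bar{*}b = c$ then $c*b = a$, so $f(c)\triangleright f(b) = f(c*b) = f(a)$, whence $f(a)\,\bar{\triangleright}\,f(b) = f(c) = f(a\bar{*}b)$. Consequently, for $y = f(y_0)$ and $z = f(z_0)$ in $\mathrm{Im}(f)$, the unique solution in $Y$ is $z\,\bar{\triangleright}\,y = f(z_0 \bar{*} y_0)$, which again lies in $\mathrm{Im}(f)$. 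Thus the element whose existence is guaranteed by the ambient axiom actually belongs to the subset, and axiom (ii) holds inside $\mathrm{Im}(f)$.

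With closure and this invertibility condition in hand, the remaining requirements—idempotency and self-distributivity of $\triangleright$, together with the singquandle identities \eqref{eq1}--\eqref{eq5}—are universally quantified equations satisfied by every element of $Y$. Because $\mathrm{Im}(f)$ is closed under $\triangleright$, $R'_1$, $R'_2$, and $\bar{\triangleright}$, each such identity continues to hold verbatim when all variables are restricted to $\mathrm{Im}(f)$. Therefore $(\mathrm{Im}(f), \triangleright, R'_1, R'_2)$ is a singquandle, i.e. a subsingquandle of $Y$. The one genuine subtlety, as noted, is the existence clause of the quandle axiom, which is precisely why the dual-operation identity $f(a\bar{*}b) = f(a)\,\bar{\triangleright}\,f(b)$ is needed.
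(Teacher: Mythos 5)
Your proposal is correct and follows the same basic route as the paper's proof: closure of $\mathrm{Im}(f)$ under $\triangleright$, $R'_1$, $R'_2$ via the homomorphism equations \eqref{3.6}--\eqref{3.8}, followed by the observation that identities holding throughout $Y$ restrict to any closed subset. The one place you go beyond the paper is in treating quandle axiom (ii): the paper's proof simply states that the identities of Definition~\ref{oriented SingQdle} are ``automatically satisfied'' on the image, which is literally true only for the universally quantified equations; the existence half of axiom (ii) is not an identity and does not restrict for free. Your argument that $f$ intertwines the dual operation, so that $f(z_0)\,\bar{\triangleright}\,f(y_0)=f(z_0\bar{*}y_0)\in\mathrm{Im}(f)$, is exactly the right way to close that gap, and it is a genuine (if small) improvement in rigor over the published proof. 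In short: same decomposition, but your version is the more careful one.
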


\begin{proof}
The equations (\ref{3.6}), (\ref{3.7}) and (\ref{3.8}) imply that $Im(f)$ is closed under $\triangleright, R'_1$ and $R'_2$.  Since the identities of Definition \ref{oriented SingQdle} are satisfied in $Y$ then they are automatically satisfied in $Im(f)$.  This concludes the proof.
\end{proof}

\section{Fundamental Singquandle of a Singular Link}\label{Fund}
In this section, we define the fundamental singquandle of a singular link and we provide an example.  Let $D$ represent a diagram of an oriented singular link $L$. We define the \emph{fundamental singquandle of $D$}, denoted by $\mathcal{SQ}(D)$, by proceeding as in classical knot theory with the fundamental quandle.  That is, it is the quotient of the free singquandle on the set $S=\{a_1, \ldots, a_m\}$ of arcs of $D$ at regular crossings and semi-arcs at singular crossings of the diagram $D$ modulo the equivalence relation generated by the regular and singular crossings in the diagram.  
In other words, if $x,y \in S$ then $x*y, x\bar{*}y, R_1(x,y), R_2(x,y) \in S$, thus making $S$ a recursive set of generators and the set of relations come from the conditions at regular and singular crossings of $D$.  The \emph{fundamental singquandle of $L$}, denoted by $\mathcal{SQ}(L)$, is well defined since any two diagrams $D_1$ and $D_2$  of $L$ are related by the generalized Reidemeister moves (as above) which correspond to making the equivalence relation representing the axioms of Definition~\ref{oriented SingQdle}.  The following is an example for illustration.

\begin{example}
In this example, we present the fundamental singquandle of the following singular link.  Consider the singular Hopf link with one singular crossing and one positive regular crossing represented by diagram $D$.  Following the naming convention in \cite{Oyamaguchi}, we will refer to this singular link as $1_1^l$.
\begin{figure}[h]
\begin{tikzpicture}[use Hobby shortcut]
\begin{knot}[
clip width=5,
]

\strand[decoration={markings,mark=at position .5 with
    {\arrow[scale=3,>=stealth]{<}}},postaction={decorate}](-1,0) circle[radius=2cm];
\strand[decoration={markings,mark=at position .5 with
    {\arrow[scale=3,>=stealth]{>}}},postaction={decorate}] (1,0) circle[radius=2cm];

\end{knot}

\node[circle,draw=black, fill=black, inner sep=0pt,minimum size=12pt] (a) at (0,1.7) {};
\node[left] at (-1,0) {\tiny $z$};
\node[left] at (-2,2) {\tiny $x$};
\node[right] at (2,2) {\tiny $y$};
\end{tikzpicture}
\vspace{.2in}
		\caption{Diagram $D$ of $1_1^l$.}
		\label{hopf}
\end{figure}
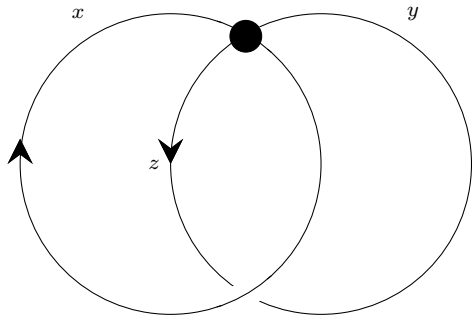
We will label the arcs of the diagram $D$ by $x,y,$ and $z$.  Then the fundamental singquandle of $1_1^l$ is given by 

\begin{eqnarray*}
\mathcal{SQ}(1_1^l)&=&\langle x,y,z; \; x=R_2(x,y), \; z=R_1(x,y), \;z*x=y\rangle\\
&=&\langle x,y; R_1(x,y)*R_2(x,y)=y \rangle.
\end{eqnarray*}
\end{example}


\section{The Singquandle Polynomial and a Singular Link Invariant}\label{SP}

We will define a generalization of the quandle polynomial, introduced in \cite{N}, to define a polynomial invariant of singular knots and links. We will first extend the definition of the quandle polynomial to obtain a singquandle polynomial. The quandle polynomial was extended to biquandles in \cite{N2}; we will similarly extend the quandle polynomial to singquandles. Since we are only considering oriented singquandles, we will refer to them as singquandles.

\begin{definition}
Let $(X,*,R_1,R_2)$ be a finite singquandle. For every $x \in X$, define
\[ C^1(x) = \lbrace y \in X \, \vert \, y * x = y \rbrace \quad \text{and} \quad R^1(x) = \lbrace y \in X \, \vert \, x * y = x \rbrace, \]
\[ C^2(x) = \lbrace y \in X \, \vert \, R_1(y , x) = y \rbrace \quad \text{and} \quad R^2(x) = \lbrace y \in X \, \vert \, R_1(x , y) = x \rbrace, \]
\[ C^3(x) = \lbrace y \in X \, \vert \, R_2(y , x) = y \rbrace \quad \text{and} \quad R^3(x) = \lbrace y \in X \, \vert \, R_2(x , y) = x \rbrace. \]
Let $c^i(x) = \vert C^i(x)\vert$ and $r^i(x) = \vert R^i(x)\vert$ for $i=1,2,3$. Then the \emph{singquandle polynomial of X} is 
\[ sqp(X) = \sum_{x\in X} s_1^{r^1(x)}t_1^{c^1(x)}s_2^{r^2(x)}t_2^{c^2(x)}s_3^{r^3(x)}t_3^{c^3(x)}.  \]
\end{definition}
We note that the value $r^i(x)$ is the number of elements in $X$ that act trivially on $x$, while $c^i(x)$ is the number of elements of $X$ on which $x$ acts trivially.

\begin{proposition}\label{singinv}
If $(X, *, R_1, R_2)$ and $(Y, \triangleright, R'_1, R'_2)$ are isomorphic finite singquandles, then $sqp(X) = sqp(Y)$.
\end{proposition}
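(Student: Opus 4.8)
The plan is to exploit the fact that an oriented singquandle isomorphism $f\colon (X,*,R_1,R_2)\to(Y,\triangleright,R'_1,R'_2)$ is a bijection that intertwines all three operations, so that it carries each of the six sets attached to an element $x\in X$ onto the corresponding set attached to $f(x)\in Y$. Concretely, I would first fix $x\in X$ and show that $f$ restricts to a bijection from $R^1(x)=\{y\in X : x*y=x\}$ onto the analogous set $\{y'\in Y : f(x)\triangleright y'=f(x)\}$ computed in $Y$ at the point $f(x)$. For the forward inclusion, if $x*y=x$ then applying $f$ and using \eqref{3.6} gives $f(x)\triangleright f(y)=f(x)$, so $f(y)$ lies in the target set. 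For the reverse direction I would use that $f$ is itself a bijection together with \eqref{3.6} read backwards: every element of the target set has the form $f(y)$, and $f(x)\triangleright f(y)=f(x)$ means $f(x*y)=f(x)$, whence injectivity forces $x*y=x$ and $y\in R^1(x)$. This yields $r^1(x)=r^1(f(x))$, where the right-hand quantity is understood to be computed in $Y$; the identical argument with the two arguments of $*$ in the reverse roles gives $c^1(x)=c^1(f(x))$.

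Next I would repeat this verbatim for the remaining four sets, replacing \eqref{3.6} by \eqref{3.7} when handling $C^2,R^2$ (which are defined through $R_1$) and by \eqref{3.8} when handling $C^3,R^3$ (defined through $R_2$). In each case the defining relation, for instance $R_1(x,y)=x$ in the definition of $R^2(x)$, is preserved in both directions by $f$ because $f$ commutes with the relevant operation and is injective; hence $f$ again supplies the required bijection and the corresponding exponents agree, giving $r^i(x)=r^i(f(x))$ and $c^i(x)=c^i(f(x))$ for $i=1,2,3$.

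Finally I would assemble the polynomial. Since each of the six exponents in the monomial attached to $x$ equals the corresponding exponent in the monomial attached to $f(x)$, the two monomials $s_1^{r^1(x)}t_1^{c^1(x)}s_2^{r^2(x)}t_2^{c^2(x)}s_3^{r^3(x)}t_3^{c^3(x)}$ and its counterpart at $f(x)$ are literally equal. Because $f$ is a bijection from $X$ onto $Y$, reindexing the defining sum for $sqp(X)$ along $f$ rewrites it as the sum of these monomials over all elements $f(x)$ of $Y$, which is exactly $sqp(Y)$.

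I do not expect a genuine obstacle here: the statement is a routine transport-of-structure invariance. The only points requiring care are to invoke \emph{both} injectivity and surjectivity of $f$, so that each comparison is an honest set-level bijection rather than a one-sided inclusion of cardinalities, and to keep straight which of \eqref{3.6}, \eqref{3.7}, \eqref{3.8} governs each of the three pairs of sets.
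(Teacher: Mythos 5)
Your proposal is correct and follows essentially the same route as the paper: use the fact that $f$ intertwines the three operations to match each of the six sets at $x$ with the corresponding set at $f(x)$, conclude the exponents agree, and reindex the sum along the bijection $f$. The only cosmetic difference is that you establish a direct set-level bijection (using injectivity and surjectivity at once), whereas the paper derives the equality of cardinalities from two opposite inequalities obtained by running the inclusion argument for $f$ and then for $f^{-1}$.
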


\begin{proof}
Suppose $f:X \rightarrow Y$ is a singquandle isomorphism and fix $x \in X$. For all $y \in C^1(x) =\lbrace y \in X \, \vert \, y * x = y \rbrace$, we have $f(y) \triangleright f(x) = f(y*x) = f(y)$, therefore, $f(y) \in C^1(f(x))$ and $\vert C^1(x) \vert \leq \vert C^1(f(x)) \vert$. Applying the same argument to $f^{-1}$, we obtain the opposite inequality, and we have $c^1(x) = c^1(f(x))$. Note that by definition of a singquandle isomorphism we have, $R_1'(f(y),f(x)) = f(R_1(y,x))= f(y) $ and $R_2'(f(y),(x))=f(R_2(y,x)) =f(y)$. Therefore, by applying a similar argument used to show $c^1(x) = c^1(f(x))$ we obtain $c^i(x) =c^i(f(x))$ for $i=2,3$. A similar argument also shows that $r^i(x) = r^i(f(x))$ for $i=1,2,3$. Thus,
\begin{eqnarray*}
sqp(X) &=& \sum_{x\in X} s_1^{r^1(x)}t_1^{c^1(x)}s_2^{r^2(x)}t_2^{c^2(x)}s_3^{r^3(x)}t_3^{c^3(x)}\\
&=&\sum_{f(x)\in Y} s_1^{r^1(x)}t_1^{c^1(x)}s_2^{r^2(x)}t_2^{c^2(x)}s_3^{r^3(x)}t_3^{c^3(x)} \\
&=& \sum_{f(x)\in Y} s_1^{r^1(f(x))}t_1^{c^1(f(x))}s_2^{r^2(f(x))}t_2^{c^2(f(x))}s_3^{r^3(f(x))}t_3^{c^3(f(x))}\\
&=& sqp(Y).
\end{eqnarray*}
\end{proof}

In the following example we will see that we can compute $r^i(x)$ (or $c^i(x)$) from each operation table by simply going through the rows (columns) and counting the occurrences of the row number.
\begin{example}\label{sqp}
Let $X =\mathbb{Z}_4$ be the singquandle with operations $x*y = 3x+2y $, $R_1(x,y)=2x+3y$, and $R_2(x,y) = x$. We obtain this sinquandle from Example~\ref{alex}, by letting $n=4$, $t=3$, and $s=2$. These operations have the following operation tables,
\[
\begin{tabular}{ r|  c  c  c c }
* & 1 &2 &3 & 0\\
  \hline			
1& 1 & 3 & 1 & 3 \\
2& 0 & 2 & 0 & 2 \\
3& 3 & 1 & 3 & 1 \\
0& 2 & 0 & 2 & 0 \\
\end{tabular}
\qquad
\begin{tabular}{ r|  c  c  c c }
$R_1$ & 1 &2 &3 & 0\\
  \hline			
1&1 & 0 & 3 & 2 \\
2& 3 & 2 & 1 & 0 \\
3& 1 & 0 & 3 & 2 \\
0& 3 & 2 & 1 & 0 \\
\end{tabular}
\qquad
\begin{tabular}{ r|  c  c  c c }
$R_2$ & 1 &2 &3 & 0\\
  \hline			
 1&1 & 1 & 1 & 1 \\
 2&2 & 2 & 2 & 2 \\
 3&3 & 3 & 3 & 3 \\
 0&0 & 0 & 0 & 0 \\
\end{tabular}
\]
and the operations have the following $r^i(x)$ and $c^i(x)$ values for $i= 1,2,3$:
\[
\begin{tabular}{ r|  c  c  }
$x$ & $r^1(x)$ & $c^1(x)$\\
  \hline			
1&  2 & 2 \\
2&  2 & 2  \\
3&  2 & 2 \\
0&  2 & 2  \\
\end{tabular}
\qquad
\begin{tabular}{ r|  c  c  }
$x$ & $r^2(x)$ & $c^2(x)$\\
  \hline			
1&  1 & 1 \\
2&  1 & 1  \\
3&  1 & 1 \\
0&  1 & 1  \\
\end{tabular}
\qquad
\begin{tabular}{ r|  c  c  }
$x$ & $r^3(x)$ & $c^3(x)$\\
  \hline			
1&  4& 4 \\
2&  4 & 4  \\
3&  4 & 4 \\
0&  4 & 4  \\
\end{tabular}.
\]
Thus, the singquandle polynomial of $X$ is 
\[ sqp(X) = 4 s_1^2 t_1^2 s_2 t_2 s_3^4 t_3^4. \]
\end{example}

\begin{example}
Let $Y =\mathbb{Z}_4$ be the singquandle with operations $x \triangleright y =3 x + 2 xy$, $R'_1(x,y) = 3 x + 2  xy + 2 y$, and $R'_2(x,y)=2 x + 2  xy + 3 y$. A direct computation shows that in fact the operations $ \triangleright, R'_1$ and $R'_2$ satisfy the axioms of Definition~\ref{oriented SingQdle}. These operations have the following operation tables,
\[
\begin{tabular}{ r|  c  c  c c }
$\triangleright$ & 1 &2 &3 & 0\\
  \hline			
1&  1 & 3 & 1 & 3 \\
2& 2 & 2 & 2 & 2 \\
3& 3 & 1 & 3 & 1 \\
0& 0 & 0 & 0 & 0 \\
\end{tabular}
\qquad
\begin{tabular}{ r|  c  c  c c }
$R_1'$ & 1 &2 &3 & 0\\
  \hline			
1& 3 & 3 & 3 & 3 \\
2& 0 & 2 & 0 & 2 \\
3& 1 & 1 & 1 & 1 \\
0& 2 & 0 & 2 & 0 \\
\end{tabular}
\qquad
\begin{tabular}{ r|  c  c  c c }
$R_2'$ & 1 &2 &3 & 0\\
  \hline			
1& 3 & 0 & 1 & 2 \\
2& 3 & 2 & 1 & 0 \\
3& 3 & 0 & 1 & 2 \\
0& 3 & 2 & 1 & 0 \\
\end{tabular}.
\]
The operations $ \triangleright, R'_1$ and $R'_2$ have respectively the following $r^i(x)$ and $c^i(x)$ values for $i= 1,2,3$:
\[
\begin{tabular}{ r|  c  c  }
$x$ & $r^1(x)$ & $c^1(x)$\\
  \hline			
1&  2 & 4 \\
2&  4 & 2  \\
3&  2 & 4 \\
0&  4 & 2  \\
\end{tabular}
\qquad
\begin{tabular}{ r|  c  c  }
$x$ & $r^2(x)$ & $c^2(x)$\\
  \hline			
1&  0 & 0 \\
2&  2 & 2  \\
3&  0 & 0 \\
0&  2 & 2  \\
\end{tabular}
\qquad
\begin{tabular}{ r|  c  c  }
$x$ & $r^3(x)$ & $c^3(x)$\\
  \hline			
1&  1 & 1 \\
2&  1 & 1  \\
3&  1 & 1 \\
0&  1 & 1  \\
\end{tabular}.
\]
Thus, the singquandle polynomial of $Y$ is 
\[ sqp(Y) = 2 s_1^2 t_1^4 s_3 t_3+2 s_1^4 t_1^2 s_2^2 t_2^2 s_3 t_3. \]
Therefore, by Proposition~\ref{singinv} we can distinguish the singquandle $(Y,\triangleright, R_1',R_2')$ in this example and the singquandle $(X,*,R_1,R_2)$ in Example~\ref{sqp}.
\end{example}

We will now extend the subquandle polynomial defined in \cite{N} to the case of subsingquandles.

\begin{definition}
Let $X$ be a finite singquandle and $S \subset  Q$ a subsingquandle. Then the \emph{subsingquandle polynomial} is 
\[ Ssqp(S \subset X ) = \sum_{x \in S} s_1^{r^1(x)}t_1^{c^1(x)}s_2^{r^2(x)}t_2^{c^2(x)}s_3^{r^3(x)}t_3^{c^3(x)}. \]
\end{definition} 
Therefore, the subsingquandle polynomials are the contributions to the singquandle polynomial coming from the subsingquandles we are considering.

\begin{example}
Consider the subsingquandle $S_1=\lbrace 1,3 \rbrace$ of $X$ defined in Example \ref{sqp}. Thus, the subsingquandle polynomial 
\[Ssqp(S_1 \subset X) = 2 s_1^2 t_1^2 s_2 t_2 s_3^4   t_3^4. \]
\end{example}

For any singular link $L$, there is an associated fundamental singquandle $(\mathcal{SQ}(L), *, R_1, R_2)$, and for a finite singquandle $(T, \triangleright, R_1', R_2')$ the set of singquandle homomorphisms
\begin{equation*}
\begin{split}
\textup{Hom}(\mathcal{SQ}(L),T) = \lbrace f \, : \,  &\mathcal{SQ}(L) \rightarrow T \, \vert \, f( x*y) = f(x) \triangleright f(y),\\ &f(R_1(x,y))= R_1'(f(x),f(y)),f(R_2(x,y))= R_2'(f(x),f(y)) \rbrace     
\end{split}
\end{equation*}
can be used to construct computable invariants for singular knots. For example, by computing the cardinality of this set we obtain the singquandle counting invariant defined in \cite{CEHN}. By Lemma~\ref{image}, the image of a homomorphism from the fundamental singquandle of $L$ to a singquandle $T$ is a subsingquandle of $T$. Therefore, we can modify the singquandle counting invariant to obtain a multiset-valued link invariant by considering the subsingquandle polynomial of the image of $f$, for each $f \in \textup{Hom}(\mathcal{SQ}(L), T)$. By computing the cardinality of this multiset we retrieve the singquandle counting invariant. Additionally, the subsingquandle polynomial is a generalization of the specialized subquandle polynomial invariant which was shown in \cite{N} to distinguish some classical links which have the same quandle counting invariant.

\begin{definition}
Let $L$ be a singular link, $T$ be a finite singquandle. Then the multiset
\[ \Phi_{Ssqp}(L,T) = \lbrace Ssqp(Im(f) \subset T) \, \vert \, f \in \text{Hom}(\mathcal{SQ}(L), T )\rbrace \]
is the \emph{subsingquandle polynomial invariant of $L$} with respect to $T$. We can rewrite the multiset in a polynomial-style form by converting the multiset elements to exponents of a formal variable $u$ and converting their multiplicities to coefficients:
\[ \phi_{Ssqp}(L,T) = \sum_{f \in \textup{Hom}(\mathcal{SQ}(L),T)} u^{Ssqp(Im(f) \subset T)}.\]
\end{definition}
\begin{remark}
Let $t_i=s_i=0$ for $i=1,2,3$ in the subquandle polynomial invariant of $L$, then we obtain
\[  \phi_{Ssqp}(L,T) = \sum_{f \in \textup{Hom}(\mathcal{SQ}(L),T)} u^0 = \vert \textup{Hom}(\mathcal{SQ}(L),T)\vert.\]
We will show that the subsingquandle polynomial invariant contains more data than just the singquandle counting invariant.
\end{remark}
\section{Examples}\label{examples}
In this section, we distinguish singular knots and links by computing the subsingquandle polynomial invariant defined in the previous section. 

\begin{example}
Consider the 2-bouquet graphs of type $L$ listed as $1^l_1$ and $6_{11}^l$ in \cite{Oyamaguchi}. Let $(X,*,R_1,R_2)$ be the singquandle with $X = \mathbb{Z}_8$ and operations $x*y=7 x + 6 y + 4 x y=x\bar{*}y$, $R_1(x,y) = 2 x + 7 y + 4 x y$ and $R_2(x,y)=4 x^2+5 x+4 y$. A direct computation shows that the operations $*, R_1$ and $R_2$ satisfy the axioms of Definition~\ref{oriented SingQdle}. We can also represent these operations by the following operation tables,
\[
\begin{array}{l|cccccccc}
* & 1 & 2 & 3 & 4 & 5 & 6 & 7 & 0\\
\hline
1 & 1 & 3 & 5 & 7 & 1 & 3 & 5 & 7 \\
2 & 4 & 2 & 0 & 6 & 4 & 2 & 0 & 6 \\
3 & 7 & 1 & 3 & 5 & 7 & 1 & 3 & 5 \\
4 & 2 & 0 & 6 & 4 & 2 & 0 & 6 & 4 \\
5 & 5 & 7 & 1 & 3 & 5 & 7 & 1 & 3 \\
6 & 0 & 6 & 4 & 2 & 0 & 6 & 4 & 2 \\
7 & 3 & 5 & 7 & 1 & 3 & 5 & 7 & 1 \\
0 & 6 & 4 & 2 & 0 & 6 & 4 & 2 & 0 \\
\end{array}
\hspace{.25in}
\begin{array}{l|cccccccc}
R_1 & 1 & 2 & 3 & 4 & 5 & 6 & 7 & 0\\
\hline
1& 5 & 0 & 3 & 6 & 1 & 4 & 7 & 2 \\
2& 3 & 2 & 1 & 0 & 7 & 6 & 5 & 4 \\
3& 1 & 4 & 7 & 2 & 5 & 0 & 3 & 6 \\
4& 7 & 6 & 5 & 4 & 3 & 2 & 1 & 0 \\
5& 5 & 0 & 3 & 6 & 1 & 4 & 7 & 2 \\
6& 3 & 2 & 1 & 0 & 7 & 6 & 5 & 4 \\
7& 1 & 4 & 7 & 2 & 5 & 0 & 3 & 6 \\
0& 7 & 6 & 5 & 4 & 3 & 2 & 1 & 0 \\
\end{array}
\hspace{.25in}
\begin{array}{l|cccccccc}
R_2 & 1 & 2 & 3 & 4 & 5 & 6 & 7 & 0\\
\hline
1& 5 & 1 & 5 & 1 & 5 & 1 & 5 & 1 \\
2& 6 & 2 & 6 & 2 & 6 & 2 & 6 & 2 \\
3& 7 & 3 & 7 & 3 & 7 & 3 & 7 & 3 \\
4& 0 & 4 & 0 & 4 & 0 & 4 & 0 & 4 \\
5& 1 & 5 & 1 & 5 & 1 & 5 & 1 & 5 \\
6& 2 & 6 & 2 & 6 & 2 & 6 & 2 & 6 \\
7& 3 & 7 & 3 & 7 & 3 & 7 & 3 & 7 \\
0& 4 & 0 & 4 & 0 & 4 & 0 & 4 & 0 \\
\end{array}.
\]
The following tables include $r^i(x)$ and $c^i(x)$ values for $i=1,2,3$,
\[
\begin{array}{l|cc}
x & r^1(x) & c^1(x) \\
\hline
1& 2 & 2 \\
2& 2 & 2 \\
3& 2 & 2 \\
4& 2 & 2 \\
5& 2 & 2 \\
6& 2 & 2 \\
7& 2 & 2 \\
0& 2 & 2 \\
\end{array}
\hspace{.5in}
\begin{array}{l|cc}
x & r^2(x) & c^2(x) \\
\hline
1& 1 & 1 \\
2& 1 & 1 \\
3& 1 & 1 \\
4& 1 & 1 \\
5& 1 & 1 \\
6& 1 & 1 \\
7& 1 & 1 \\
0& 1 & 1 \\
\end{array}
\hspace{.5in}
\begin{array}{l|cc}
x & r^3(x) & c^3(x)\\
\hline
1 & 4 & 0 \\
2 & 4 & 8 \\
3 & 4 & 0 \\
4 & 4 & 8 \\
5 & 4 & 0 \\
6 & 4 & 8 \\
7 & 4 & 0 \\
0 & 4 & 8 \\
\end{array}.
\]
The singular link $1_1^l$, Figure~\ref{11l}, 
\begin{figure}[h]
\begin{tikzpicture}[use Hobby shortcut,scale=1.7]
\begin{knot}[
clip width=5
]

\strand[decoration={markings,mark=at position .5 with
    {\arrow[scale=3,>=stealth]{<}}},postaction={decorate}](-1,0) circle[radius=2cm];
\strand[decoration={markings,mark=at position .5 with
    {\arrow[scale=3,>=stealth]{>}}},postaction={decorate}] (1,0) circle[radius=2cm];

\end{knot}

\node[circle,draw=black, fill=black, inner sep=0pt,minimum size=12pt] (a) at (0,1.73) {};
\node[left] at (-1,0) {\tiny $z$};
\node[left] at (-2,2) {\tiny $x$};
\node[right] at (2,2) {\tiny $y$};
\end{tikzpicture}
\vspace{.2in}
		\caption{Diagram of $1_1^l$.}
		\label{11l}
\end{figure}
has the system of coloring equations given by 
\begin{eqnarray*}
4 x^2 + 5 x +  4 y &=&R_2(x,y) = x \\
7 z + 6 x + 4 x z &=& z * x = y\\
2 x + 7 y + 4 x y &=& R_1(x,y) = z.
\end{eqnarray*}
Thus, $1_1^l$ has 16 coloring by $X$. In Table~\ref{list1}, we list all coloring of $1_1^l$ by $X$ with the corresponding subsingquandle. 
\begin{table}
\centering
\begin{tabular}{ccc|c}
$x$ & $y$ & $z$ &  subsingquandle\\
\hline
 2 & 2 & 2 & $\{ 2 \}$ \\
 2 & 4 & 0 & $\{ 2,4,6,0 \}$\\
 2 & 6 & 6 &  $\{ 2,6 \}$\\
 2 & 0 & 4 & $\{ 2,4,6,0 \}$\\
 4 & 2 & 6 &  $\{ 2,4,6,0 \}$\\
 4 & 4 & 4 & $\{ 4 \}$ \\
 4 & 6 & 2 & $\{ 2,4,6,0 \}$\\
 4 & 0 & 0 &  $\{ 4,0 \}$\\
 6 & 2 & 2 & $\{ 2,6 \}$\\
 6 & 4 & 0 & $\{ 2,4,6,0 \}$\\
 6 & 6 & 6 &  $\{ 6 \}$\\
 6 & 0 & 4 & $\{ 2,4,6,0 \}$\\
 0 & 2 & 6 & $\{ 2,4,6,0 \}$\\
 0 & 4 & 4 & $\{ 4,0\}$\\
 0 & 6 & 2 & $\{ 2,4,6,0 \}$\\
 0 & 0 & 0 & $\{ 0 \}$ \\
\end{tabular}
\caption{\label{list1} List of colorings of $1_1^l$ by $X$ and the corresponding subsingquandle.}
\end{table}

The singular link $6_{11}^l$, Figure~\ref{611l},
\begin{figure}[h]
\begin{tikzpicture}[use Hobby shortcut, scale=1.3]
\begin{knot}[
 consider self intersections=true,
 clip width=5,
 flip crossing/.list={4,5,7}
]

\strand ([closed]-3,0)[decoration={markings,mark=at position .5 with
    {\arrow[scale=3,>=stealth]{>}}},postaction={decorate}]..(0,-3)..(3,0)..(0,3);
\strand ([closed]0,-.5)[decoration={markings,mark=at position .5 with
    {\arrow[scale=3,>=stealth]{>}}},postaction={decorate}]..(-4,-3)..(-1.5,-3)..(1.5,0)..(0,1.5)..(-1.5,0)..(1.5,-3)..(4,-3)..(0,-.5);

\end{knot}

\node[circle,draw=black, fill=black, inner sep=0pt,minimum size=12pt] (a) at (-2.85,-1) {};
\node[left] at (-4.2,-2) {\tiny $R_1(x,y)$};
\node[left] at (-2.3,-2) {\tiny $R_2(x,y)$};
\node[left] at (-3,0) {\tiny $x$};
\node[above] at (-2,-.8) {\tiny $y$};
\node[right] at (4.2,-2) {\tiny $z$};
\node[above] at (.6,-2.6) {\tiny $k$};
\node[above] at (0,1.5) {\tiny $w$};
\draw [->,>=stealth] (5,0) -- (1.3,-1.2);
\node[above] at (5,0) {\tiny $R_1(x,y) * R_2(x,y)$};
\end{tikzpicture}
\vspace{.2in}
		\caption{Diagram of $6_{11}^l$.}
		\label{611l}
\end{figure}
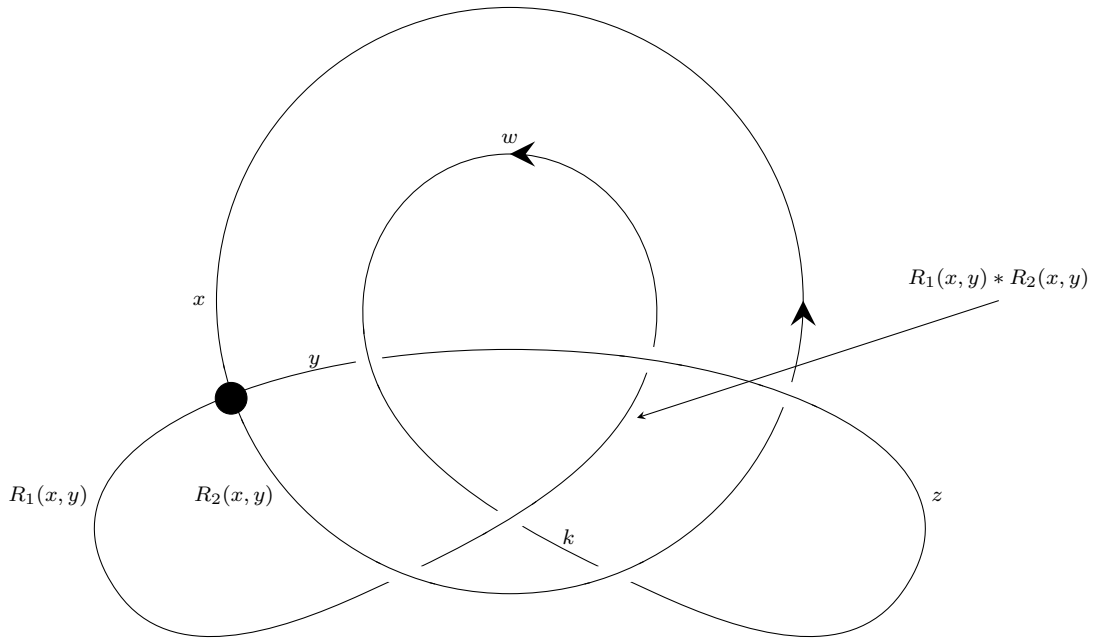
has the system of coloring equations given by 
\begin{eqnarray*}
3 x + 4 x^2 + 4 y + 6 z + 4 x z &=&R_2(x,y) \bar{*} z = x \\
6 w + 7 z + 4 w z &=&z \bar{*}w = y\\
7 k + 6 x + 4 k x &=&k \bar{*}R_2(x,y) = z\\
4 x + 7 y + 6 z + 4 y z &=&(R_1(x,y) * R_2(x,y))\bar{*}z = w\\
7 w + 6 y + 4 w y &=& w \bar{*} (R_1(x,y) * R_2(x,y)) =k.
\end{eqnarray*}
Therefore, $6_{11}^l$ also has 16 coloring by $X$. In Table~\ref{list2},  we list all coloring of $1_1^l$ by $X$ with the corresponding subsingquandle.

\begin{table}[h]
    \centering
\begin{tabular}{cccccccc|c}
 $x$ & $y$ & $z$ & $w$ & $k$ & $R_1(x,y)$ & $R_2(x,y)$ & $R_1(x,y) * R_2(x,y)$ & subsingquandle \\
 \hline
 1 & 3 & 3 & 7 & 7 & 3 & 5 & 7 & $\{1,3,5,7 \}$\\
 1 & 7 & 7 & 3 & 3 & 7 & 5 & 3 & $\{1,3,5,7 \}$\\
 2 & 2 & 2 & 2 & 2 & 2 & 2 & 2 &  $\{ 2\}$\\
 2 & 6 & 6 & 6 & 6 & 6 & 2 & 6 & $\{ 2,6\}$\\
 3 & 1 & 1 & 5 & 5 & 1 & 7 & 5 & $\{1,3,5,7 \}$\\
 3 & 5 & 5 & 1 & 1 & 5 & 7 & 1 & $\{1,3,5,7 \}$\\
 4 & 4 & 4 & 4 & 4 & 4 & 4 & 4 &  $\{ 4\}$\\
 4 & 0 & 0 & 0 & 0 & 0 & 4 & 0 & $\{ 4,0\}$\\
 5 & 3 & 3 & 7 & 7 & 3 & 1 & 7 & $\{1,3,5,7 \}$\\
 5 & 7 & 7 & 3 & 3 & 7 & 1 & 3 & $\{1,3,5,7 \}$\\
 6 & 2 & 2 & 2 & 2 & 2 & 6 & 2 & $\{ 2,6\}$\\
 6 & 6 & 6 & 6 & 6 & 6 & 6 & 6 & $\{ 6\}$\\
 7 & 1 & 1 & 5 & 5 & 1 & 3 & 5 & $\{1,3,5,7 \}$\\
 7 & 5 & 5 & 1 & 1 & 5 & 3 & 1 & $\{1,3,5,7 \}$\\
 0 & 4 & 4 & 4 & 4 & 4 & 0 & 4 &  $\{ 4,0\}$\\
  0 & 0 & 0 & 0 & 0 & 0 & 0 & 0 & $\{ 0\}$
\end{tabular}
\caption{\label{list2} List of colorings of $6_1^l$ by $X$ and the corresponding subsingquandle.}
\end{table}

Furthermore, both $1^l_1$ and $6_{11}^l$ have the same Jablan polynomial, $\Delta_J(1^l_1)=s-t=\Delta_J(6^l_{11})$ as computed in \cite{NOS}. However, the subsingquandle polynomial invariants
\[ \phi_{Ssqp}(1_1^l,X) = 8 u^{4 s_1^2t_1^2 s_2  t_2 s_3^4  t_3^8}+ 4 u^{2 s_1^2 t_1^2 s_2 t_2 s_3^4 t_3^8}+4 u^{s_1^2 t_1^2 s_2 t_2 s_3^4 t_3^8}\] 
and
\[\phi_{Ssqp}(6_{11}^l,X) =8 u^{4 s_1^2 t_1^2 s_2 t_2 s_3^4 }+4
   u^{2 s_1^2 t_1^2 s_2 t_2 s_3^4 t_3^8}+4 u^{s_1^2 t_1^2 s_2 t_2 s_3^4 t_3^8}\]
distinguish these singular links.
\end{example}

\begin{example}
Consider the singquandle $X = \mathbb{Z}_8$ with operations $x*y = 5x+4y = x\bar{*} y$ and $R_1(x,y)= 6+5x+6xy$, and $R_2(x,y)= 6+5y+6xy$. A direct computation shows that the operations $*, R_1$ and $R_2$ satisfy the axioms of Definition~\ref{oriented SingQdle}.
The following singular knot $K_1$ below has the following coloring equations
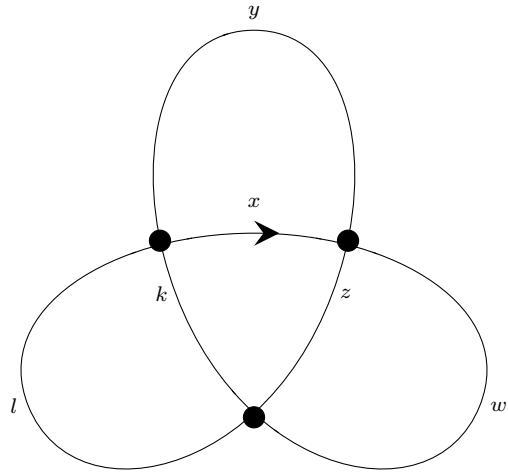
\begin{figure}[h]
\begin{tikzpicture}[use Hobby shortcut]
\begin{knot}[
  consider self intersections=true,
  ignore endpoint intersections=false,
  flip crossing=3,
  only when rendering/.style={
  }
  ]
\strand ([closed]0,1.5)[decoration={markings,mark=at position .5 with
    {\arrow[scale=3,>=stealth]{<}}},postaction={decorate}]..(-1.2,-1.5).. (3,-3.5) ..(0,-1.2) ..(-3,-3.5) ..(1.2,-1.5)..(0,1.5);
\end{knot}
\node[circle,draw=black, fill=black, inner sep=0pt,minimum size=8pt] (a) at (0,-3.65) {};
\node[circle,draw=black, fill=black, inner sep=0pt,minimum size=8pt] (a) at (1.25,-1.3) {};
\node[circle,draw=black, fill=black, inner sep=0pt,minimum size=8pt] (a) at (-1.25,-1.3) {};
\node[above] at (0,-1) {\tiny $x$};
\node[above] at (0,1.5) {\tiny $y$};
\node[right] at (1,-2) {\tiny $z$};
\node[right] at (3,-3.5) {\tiny $w$};
\node[left] at (-3,-3.5) {\tiny $l$};
\node[left] at (-1,-2) {\tiny $k$};

\end{tikzpicture}
		\caption{Diagram of $K_1$.}
		\label{k1}
\end{figure}
\begin{eqnarray*}
6 + 5 k + 6 k l= R_1(k,l) &=& x \\
6 + 5 l + 6 k l= R_2(k,l) &=& y \\
6 + 5 x + 6 x y= R_1(x,y) &=& z \\
6 + 5 y + 6 x y= R_2(x,y) &=& w \\
6 + 5 w + 6 w z= R_2(z,w) &=& l \\
6 + 5 z + 6 w z= R_1(z,w) &=& k.
\end{eqnarray*}
Therefore, $K_1$ has 8 colorings by $X$. The singular knot $K_2$ below
\begin{figure}[h]
\begin{tikzpicture}[use Hobby shortcut]
\begin{knot}[
  consider self intersections=true,
  ignore endpoint intersections=false,
  flip crossing=4,
  clip width=5,
  only when rendering/.style={
  }
  ]
\strand ([closed]0,1.5)[decoration={markings,mark=at position .5 with
    {\arrow[scale=3,>=stealth]{<}}},postaction={decorate}]..(-1.2,-1.5).. (3,-3.5) ..(0,-1.2) ..(-3,-3.5) ..(1.2,-1.5)..(0,1.5);
\end{knot}
\node[circle,draw=black, fill=black, inner sep=0pt,minimum size=9.5pt] (a) at (1.25,-1.35) {};
\node[circle,draw=black, fill=black, inner sep=0pt,minimum size=9.6pt] (a) at (-1.241,-1.35) {};
\node[above] at (0,-1) {\tiny $x$};
\node[above] at (0,1.5) {\tiny $y$};
\node[below] at (-1.5,-4.5) {\tiny $z$};
\node[right] at (3,-3.5) {\tiny $w$};
\node[left] at (-1,-2.5) {\tiny $k$};
\end{tikzpicture}
		\caption{Diagram of $K_2$.}
		\label{K2}
\end{figure}
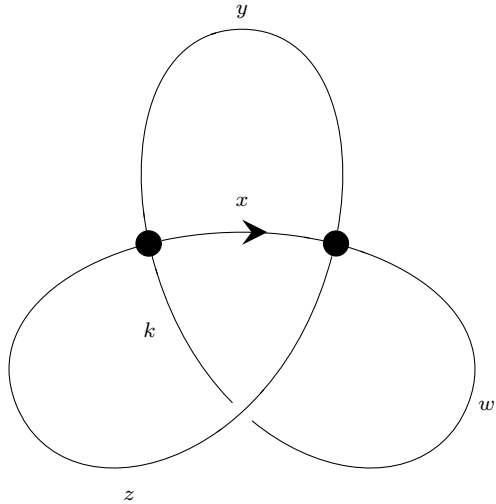
has the following coloring equations
\begin{eqnarray*}
6 + 5 k + 6 k z= R_1(k,z) &=& x\\
6 + 5 z + 6 k z= R_2(k,z) &=& y\\
6 + 5 x + 6 x y= R_1(x,y) &=& z\\
6 + 5 y + 6 x y= R_2(x,y) &=& w\\
5 w + 4 z= w \bar{*}z &=& k.
\end{eqnarray*}
Therefore, $K_2$ also has 8 colorings by $X$. However, the subsingquandle polynomial  invariants
\[ \phi_{Ssqp}(K_1,X) = 4 u^{s_1^4 t_1^4 s_2^2 t_2^2 s_3 t_3}+4 u^{2 s_1^4 t_1^4 s_2^2 t_2^2 s_3
   t_3}\]
and
\[ \phi_{Ssqp}(K_2,X)=4 u^{4 s_1^4 t_1^4 s_3 t_3}+4 u^{s_1^4 t_1^4 s_2^2 t_2^2 s_3 t_3}\]
distinguish the singular knots $K_1$ and $K_2$.
\end{example}

{\bf Acknowledgement}: The authors would like to thank the anonymous referee for fruitful comments which improved the paper.

\end{document}